\documentclass[11pt,english]{article}
\usepackage[margin= 2 cm,bottom=23mm, top= 20mm]{geometry}

\usepackage{amsthm}
\usepackage{amsmath}
\usepackage{amssymb}
\usepackage{setspace}
\usepackage{mathtools}
\usepackage{verbatim}
\usepackage{csquotes}
\usepackage[hidelinks]{hyperref}
\usepackage{thm-restate}
\usepackage{cleveref}
\usepackage{enumitem}
\setlist[itemize]{leftmargin=*} 
\setlist[enumerate]{leftmargin=*}
\usepackage{framed}
\usepackage{floatrow}
\usepackage[T1]{fontenc}
\usepackage{bbm}
\usepackage[color=Orange!50!white, textwidth=22mm]{todonotes}

\floatsetup{ 
  heightadjust=object,
  valign=c
}



\usepackage{mathdots}
\usepackage{xcolor}
\usepackage{diagbox}
\usepackage{colortbl}

\usepackage{graphicx}
\usepackage{subcaption}

\setlength{\parskip}{\smallskipamount}
\setlength{\parindent}{0pt}

\addtolength{\intextsep}{1pt} 
\addtolength{\abovecaptionskip}{3pt}
\addtolength{\belowcaptionskip}{-6pt}
\captionsetup{width=0.95\textwidth, labelfont=bf, parskip=5pt}

\setstretch{1.035}

\theoremstyle{plain}

\newtheorem{theorem}{Theorem}[section]
\newtheorem{claim}[theorem]{Claim}

\newtheorem{lemma}[theorem]{Lemma}

\theoremstyle{definition}

\newtheorem{defn}[theorem]{Definition}
\newtheorem*{defn*}{Definition}

\expandafter\def\expandafter\normalsize\expandafter{%
    \normalsize
    \setlength\abovedisplayskip{4pt}
    \setlength\belowdisplayskip{4pt}
    \setlength\abovedisplayshortskip{4pt}
    \setlength\belowdisplayshortskip{4pt}
}

\usepackage[square,sort,comma,numbers]{natbib}
\setlength{\bibsep}{1 pt plus 20 ex}

\newcommand{\calF}{\mathcal{F}}

\newcommand{\calG}{\mathcal{G}}
\newcommand{\calH}{\mathcal{H}}

\newcommand{\calT}{\mathcal{T}}

\def\eps {\varepsilon}

\newcommand{\Bin}{\mathrm{Bin}}

\newcommand{\bit}{\mathrm{bit}}

\newcommand{\tw}{\mathrm{tw}}

\renewcommand{\Pr}{\mathbb{P}}

\DeclareMathOperator*{\argmax}{arg\,max}

\newcommand{\hide}[1]{}

\title{Lower bounds for Ramsey numbers of bounded degree hypergraphs}
\author{Domagoj Brada\v{c}\thanks{Department of Mathematics, ETH, Z\"urich, Switzerland. Research supported in part by SNSF grant 200021-228014. Email: \textbf{\{domagoj.bradac, zach.hunter, benjamin.sudakov\}@math.ethz.ch}.}
\and Zach Hunter\footnotemark[1] \and Benny Sudakov\footnotemark[1]}
\date{}

\begin{document}

\maketitle

\begin{abstract}
    We prove that, for all $k \ge 3,$ and any integers $\Delta, n$ with $n \ge \Delta,$ there exists a $k$-uniform hypergraph on $n$ vertices with maximum degree at most $\Delta$ whose $4$-color Ramsey number is at least $\tw_k(c_k \Delta) \cdot n$, for some constant $c_k > 0$, where $\tw_k$ denotes the tower function. For $k \ge 4,$ this is tight up to the constant $c_k$ and for $k = 3$ it is known to be tight up to a factor of $\log \Delta$ on top of the tower. Our bound extends a well-known result of Graham, R\"{o}dl and Ruci\'{n}ski for graphs and answers a question of Conlon, Fox and Sudakov from 2008.
\end{abstract}

\section{Introduction}
Given a positive integer $q$ and a $k$-uniform hypergraph (or $k$-graph for short) $H$, the $q$-color Ramsey number of $H$, denoted by $r(H; q)$ is the minimum integer $N$ such that in any $q$-coloring of the complete $k$-uniform hypergraph on $N$ vertices, there is a monochromatic copy of $H$. The existence of these numbers is a cornerstone result of Ramsey~\cite{ramsey} and since then a great deal of work has been done on establishing good quantitative bounds for these numbers.

The most classical problem is when $H$ is a graph clique. Famous results of Erd\H{o}s~\cite{erdos} and Erd\H{o}s and Szekeres~\cite{erdos-szekeres} show that $\sqrt{2}^n < r(K_n; 2) < 4^n$. The upper bound was improved in a recent breakthrough of Campos, Griffiths, Morris and Sahasrabudhe~\cite{camposgriffiths} (see also~\cite{balister} and ~\cite{best-ramsey}). For hypergraph cliques, the picture is somewhat less clear. Erd\H{o}s and Rado~\cite{erdos-rado} showed that $r(K_n^{(k)}; q) \le \tw_k(O_q(n))$, where $\tw_k$ denotes the tower function defined as $\tw_1(x) = x$ and $\tw_k(x) = 2^{\tw_{k-1}(x)}$ for $k \ge 2.$ On the other hand, an ingenious construction of Erd\H{o}s and Hajnal, known as the stepping-up lemma (see e.g. \cite{graham1991ramsey}), shows that for $k \ge 3$, $r(K_n^{(k)}; 2) \ge \tw_{k-1}(\Omega_k(n^2))$ and $r(K_n^{(k)}; 4) \ge \tw_k(\Omega_k(n))$. Notably, for at least $4$ colors, the lower bound matches the upper bound up to the constant on top of the tower, and it is a major open problem to close the gap for two colors.

One prominent direction has been to understand the growth rate of Ramsey numbers of sparse graphs and hypergraphs. As an early application of the regularity method, Chv\'{a}tal, R\"{o}dl, Szemer\'{e}di and Trotter~\cite{chvatal} proved that Ramsey numbers of bounded degree graphs are linear in their number of vertices. In other words, for any $\Delta$ and $q$ there is a constant $C = C(\Delta, q)$ such that for any graph $G$ with $n$ vertices and maximum degree at most $\Delta,$ it holds that $r(G; q) \le C \cdot n$. Because their proof relies on the regularity lemma, it gives a tower-type dependence of $C$ on $\Delta$. Since this breakthrough result, the problem of determining the correct dependence of $C$ on $\Delta$ has received a lot of attention. 

Eaton~\cite{eaton} used the so-called weak regularity lemma of Duke, Lefmann and R\"{o}dl~\cite{duke-lefmann-rodl} to improve the bound to $C(\Delta, q) \le 2^{2^{c' \Delta}}$. For two colors, Graham, R\"{o}dl and Rucinski~\cite{graham2000graphs} pioneered the so-called greedy embedding approach which avoids the use of the regularity lemma and gave the bound $C(\Delta, 2) \le 2^{c' \Delta \log^2 \Delta}$. Furthermore, they showed that for any $\Delta, n$ there exists a graph $G$ on $n$ vertices with maximum degree at most $\Delta$ satisfying $r(G; 2) \ge 2^{c'' \Delta} \cdot n$ for some absolute constant $c''$. Finally, Conlon, Fox and Sudakov~\cite{conlonfoxsudakovdeltalogdelta} built on the approach of Graham, R\"{o}dl and Rucinski to prove the current best bound $C(\Delta, 2) \le 2^{c' \Delta \log \Delta}$. For more than two colors, the best known bound from \cite{FS} is weaker $C(\Delta, q) \le 2^{O_q(\Delta^2)}$ and it is achieved using dependent random choice. 

Next we turn our attention to Ramsey numbers of bounded degree hypergraphs. The result that bounded degree hypergraphs have linear Ramsey numbers was proved by Cooley, Fountoulakis, K\"{u}hn and Osthus~\cite{cooleyfount3, cooleyfount-all-k} and for $3$-uniform hypergraphs independently by Nagle, Olsen, R\"{o}dl and Schacht~\cite{nagleolsen}. Formally, they show that for any $k, \Delta, q$, there exists a constant $C = C^{(k)}(\Delta, q)$ such that $r(H; q) \le Cn$ for any $n$-vertex $k$-graph with maximum degree $\Delta$. These results relied on the hypergraph regularity method and thus gave Ackermann type bounds on $C$ with respect to $\Delta$. Conlon, Fox and Sudakov~\cite{ramseynumbersofsparsehypergraphs} used dependent random choice to show that for $k \ge 4$, $C^{(k)}(\Delta, q) \le \tw_{k}(c \Delta)$ for some $c = c(k, q)$, while for $k = 3,$ their argument yields $C^{(3)}(\Delta, q) \le \tw_{3}(c' \Delta \log \Delta)$, for some $c' = c'(q)$.

This bound was shown to be tight in some cases. Specifically, Conlon, Fox and Sudakov~\cite{ramseynumbersofsparsehypergraphs} constructed $3$-uniform hypergraphs with maximum degree $\Delta$ and $O(\Delta)$ vertices, whose $4$-color Ramsey number is at least $\tw_3(c \Delta)$ for some constant $c>0$, and this was generalized to higher uniformities by Brada\v{c}, Fox and Sudakov~\cite{bradavc2023ramsey}. For later use, we recall this result as follows:

\begin{theorem}{\cite[Theorem~1.3]{bradavc2023ramsey}}\label{bounded construction}
    For any $k\ge 2$, there is a constant $C_k$ so that for any $n \ge C_k$, there exists a $k$-uniform $n$-vertex hypergraph $H$ with maximum degree at most $C_k n$ whose $4$-color Ramsey number is at least $\tw_k(n/C_k)$.
\end{theorem}

The upper bound on the maximum degree is not explicitly stated in~\cite{bradavc2023ramsey} but it can be easily extracted from the proof. Notably, however, in both of these constructions, the number of vertices of the hypergraph is comparable to its maximum degree. Hence, it was an open problem (mentioned in \cite{ramseynumbersofsparsehypergraphs, bradavc2023ramsey}) to obtain constructions with large but fixed $\Delta$ and growing $n$.

Our main result achieves this. It can be viewed as a generalization of these results and the aforementioned lower bound of Graham, R\"{o}dl and Rucinski for Ramsey numbers of bounded degree graphs.

\begin{theorem} \label{thm:main}
    For any $k \ge 2,$ there is a constant $c_k > 0$ such that for any integers $\Delta \ge 1/c_k$ and  $n\ge \Delta$, there exists a $k$-uniform $n$-vertex hypergraph $H$ with maximum degree at most $\Delta$ whose $4$-color Ramsey number is at least $\tw_k(c_k \Delta) \cdot n$.
\end{theorem}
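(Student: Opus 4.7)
My approach is to combine the bounded-degree graph lower bound of Graham--R\"odl--Rucinski with an iterated Erd\H{o}s--Hajnal stepping-up, arranging the graph-level maximum degree to be $\Delta_0 \asymp \sqrt{\Delta}$ so that the degree blow-up incurred by the stepping-up --- which essentially squares the degree --- lands at $\Delta$. Concretely, set $\Delta_0 := \lceil c_k \sqrt{\Delta}\rceil$ and invoke~\cite{graham2000graphs} to fix an $n$-vertex graph $G_0$ with $\Delta(G_0)\le \Delta_0$ together with a $2$-coloring $\chi$ of $E(K_N)$, $N = 2^{c\Delta_0}n$, with no monochromatic copy of $G_0$. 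The $k=2$ case is then immediate, since $\tw_2(c_2\sqrt{\Delta})n = 2^{c_2\sqrt{\Delta}}n$ and GRR already gives a stronger bound.

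For $k\ge 3$, I iterate the standard $(k{-}1)\to k$ stepping-up construction, starting from $\chi$. Vertices at level $\ell$ are identified with binary strings of length $M_{\ell-1}$, where $M_2=N$ and $M_\ell = 2^{M_{\ell-1}}$; the color of an $\ell$-set of binary strings is determined by its $\ell{-}1$ "top-difference" coordinates via the level-$(\ell{-}1)$ coloring, together with order-type bits introduced once in the $2 \to 3$ step (bringing the number of colors from $2$ to $4$). The point is that a monochromatic subset at level $\ell$ has a sorted "spine" of binary strings whose top-difference indices form a monochromatic subset at level $\ell-1$. After $k-2$ iterations this produces a $4$-coloring of $K^{(k)}_{M_k}$ with $M_k \ge \tw_k(c_k\sqrt{\Delta}) n$.

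The heart of the proof is to specify an $n$-vertex forbidden $k$-graph $H$ of maximum degree at most $\Delta$. I would build $H$ as a hypergraph \emph{expansion} of $G_0$: take a bounded number of level-copies of each vertex of $G_0$, and for every edge $\{u,v\}\in E(G_0)$ place a carefully chosen family of $k$-edges that thread level-copies of $u$ and $v$ in a pattern compatible with the stepping-up order-type rules. By the standard stepping-up analysis (adapted from the clique setting to this sparse setting), any monochromatic copy of $H$ in the $K^{(k)}_{M_k}$ coloring would, upon projection along top-difference coordinates, yield a monochromatic copy of $G_0$ under $\chi$, contradicting the choice of $G_0$. Each vertex of $H$ sits at some level-copy of some $u \in V(G_0)$, and its degree is bounded by $\deg_{G_0}(u)$ times the number of level-pairings it can participate in; both quantities are $O(\Delta_0)$, giving $\Delta(H) = O(\Delta_0^2) = O(\Delta)$.

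The main obstacle is balancing two opposing constraints on the expansion of $G_0$ into $H$: it must be rich enough that every monochromatic copy of it in the lifted $4$-coloring forces a monochromatic $G_0$ in $\chi$ (otherwise the stepping-up argument breaks), yet sparse enough to have maximum degree $O(\Delta_0^2)$ rather than the $\Omega(\Delta_0\cdot n)$ that a naive level-blow-up would produce. This tension is also the apparent source of the $\sqrt{\Delta}$ on top of the tower rather than $\Delta$ itself: a more delicate expansion could conceivably match the upper bound $\tw_k(O(\Delta)) n$ from~\cite{ramseynumbersofsparsehypergraphs}, but even the $\sqrt{\Delta}$ version seems to require careful bookkeeping of which level-tuples carry a hyperedge so that the degree stays bounded while the projection property is preserved.
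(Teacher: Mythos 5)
Your high-level plan matches the philosophy of the paper (step up a Graham--R\"odl--Ruci\'nski-type bound through the Erd\H{o}s--Hajnal construction, keeping the ``bottom-level'' parameter at $\sqrt{\Delta}$ so the degree lands at $\Delta$), but the core of the argument is missing, and the specific route you suggest runs into exactly the obstruction you name without overcoming it. The crucial unspecified object is $H$ itself: ``a carefully chosen family of $k$-edges that thread level-copies of $u$ and $v$ in a pattern compatible with the stepping-up order-type rules'' is precisely the content of the theorem, not a step one can defer. The reason a rigid expansion of a fixed graph $G_0$ is problematic is that the standard stepping-up projection does not send a sparse monochromatic copy to a copy of a predetermined structure. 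When a monochromatic copy of $H$ is embedded into the level-$k$ coloring, you have no control over which vertices of $H$ receive colliding or nearly-colliding images, where the large $\delta$-values (the ``pivots'') sit inside the embedded order, or which edges of $H$ map to tuples with monotone $\delta$-vectors (non-monotone ones are colored by the order-type colors and carry no information about the lower-level coloring). Consequently the projection along top-difference coordinates yields some hypergraph depending on the embedding, not $G_0$, and ``the standard stepping-up analysis adapted to the sparse setting'' is exactly what has to be invented.

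The paper resolves this differently: $H$ is not an expansion of a fixed graph but a pseudorandom (random) $k$-graph with expected degree $\Theta(m^2)$, $m\asymp\sqrt{\Delta}$, whose relevant properties are (a) an edge across any sets $A,B$ of size $\Omega(n/m)$ and $W_1,\dots,W_{k-2}$ of size $\Omega(n)$, and (b) all derived link graphs $H(U;W_1,\dots,W_{k-2})$ lie in a robust class $\calG_m$. The induction is run on blow-up colorings $\phi^{(k)}_m[b]$ that explicitly allow up to $b=\alpha_k n/m$ collisions per image point; a monochromatic copy of $H$ is analyzed via an interval-halving procedure on the sorted images, property (a) is used to rule out pivots landing in the middle of the current interval (this is where the two-edge color-clash argument lives), and the projection $\Psi(i)=\delta(x_i,x^*)$ produces a monochromatic copy not of a fixed graph but of the derived hypergraph $H(U;W)$, which is again pseudorandom, in $\phi^{(k-1)}_m[4b]$. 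The base case is then not ``no monochromatic $G_0$'' but a weighted statement: the bottom-level object is an arbitrary member of $\calG_m$ embedded with multiplicities up to $b$, handled by the weighted form of the GRR lemma. Your proposal would need all of this machinery (or a genuine substitute for it), and in addition your degree accounting ($\deg_{G_0}(u)$ times ``level-pairings'' giving $O(\Delta_0^2)$) is unsubstantiated absent the construction; in the paper the $\sqrt{\Delta}$ arises instead because property (a), with sets of size $n/m$ matching the blow-up parameter $b$, forces the random $k$-graph to have density corresponding to degree $\Theta(m^2)$.
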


For $k \ge 4,$ the result is optimal up to the constant $c_k$, matching the aforementioned $\tw_k(O_k(\Delta))\cdot n$ upper bound of Conlon, Fox and Sudakov \cite{ramseynumbersofsparsehypergraphs}. For $k = 3,$ the best known upper bound, also from~\cite{ramseynumbersofsparsehypergraphs}, is of the form $\tw_3(c_3 \Delta \log \Delta) \cdot n$. However, we believe our result is tight for $k=3$ as well. As it relies on a variant of the stepping-up procedure, our construction requires four colors. 

One of the key novelties of our approach is taking two different constructions whose edge union has large Ramsey number. Roughly speaking, the hypergraph we construct consists of two parts: a random part, which behaves similarly as the construction of Graham, R\"{o}dl and Rucinski and acts as a base case; and a more structured part which interacts with the stepping up coloring in a way that allows for an inductive step which, essentially, reduces the uniformity by one while keeping the properties of the hypergraph we work with. We shall greatly elaborate on this in Subsection~\ref{subsec:outline} once we introduce the stepping up coloring we use.

\section{Proof of Theorem~\ref{thm:main}}

\subsection{Setup}
To begin, we recall an important function used in this construction. For a nonnegative integer $x,$ let $x = \sum_{i=0}^{\infty} a_i 2^i$ be its unique binary representation (where $a_i = 0$ for all but finitely many $i$). For $i \ge 1,$ we denote $\bit(x, i) = a_{i-1}.$ For distinct $x, y \ge 0,$ we define $\delta(x, y) \coloneqq \max \{ i \in \mathbb{Z}_{>0} \, \vert \, \bit(x, i) \neq \bit(y, i)\}.$ Additionally, for convenience we define $\delta(x, x) = 0,$ for all $x \in \mathbb{Z}_{\ge 0}.$ For nonnegative integers $x_1 \le x_2 \le \dots \le x_t,$ we denote $\delta(\{x_1, \dots, x_t\}) = (\delta_1, \dots, \delta_{t-1})$ where for $i \in [t-1],$ $\delta_i = \delta(x_i, x_{i+1}).$ The following properties of this function are well known and easy to verify.

\begin{enumerate}[label=P\arabic*)]
    \item \label{prop:delta-smaller} For distinct $x, y$ we have $x < y \iff \bit(x, \delta(x, y)) < \bit(y, \delta(x, y))$.
    \item \label{prop:not-equal} For any $x \le y \le z$ with $x < z$, $\delta(x, y) \neq \delta(y, z)$.
    \item \label{prop:maximum} For any $x_1 \le x_2 \le \dots \le x_k,$ $\delta(x_1, x_k) = \max_{1 \le i \le k-1} \delta(x_i, x_{i+1})$. 
\end{enumerate}

To help illustrate the function $\delta,$ we borrowed and slightly modified Figure~\ref{fig:deltas} from~\cite{growth-rate}.

 \begin{figure}[ht]
    \centering
    \includegraphics[scale=0.9]{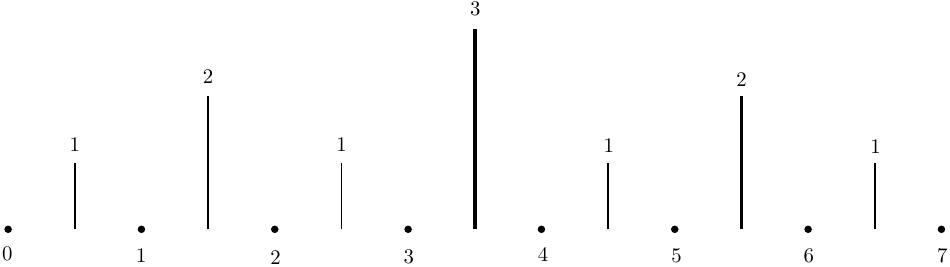}
    \caption{It is convenient to think about the function $\delta$ in the following way. The value of $\delta(x, y)$ is given by the highest line between $x$ and $y$ on the picture. So, for example, $\delta(0, 1) = \delta(6, 7) = 1,$ $\delta(0, 3) = \delta(5, 6) = 2, \, \delta(3, 4) = \delta(2, 7) = 3.$}
    \label{fig:deltas}
\end{figure}

Let us now define the coloring which will be used to prove Theorem~\ref{thm:main}. Given a positive integer $m$, define $M_2(m) = 2^{10^{-8}m}$ and for $k=3,4,\dots$ we further define $M_{k}(m)= 2^{M_{k-1}(m)-1}$. For $k \ge 2,$ we define a coloring $\phi^{(k)}_m$ of all multisets of $k$-elements from $[0, M_k(m))$ as follows. For $k=2,$ the coloring $\phi^{(2)}_m$ is essentially a random coloring. Formally, the coloring of distinct pairs satisfies the properties given by Lemma~\ref{lem:graham-rodl-rucinski} and we additionally define $\phi^{(2)}_m(x, x) = \mathrm{red}$ for all $x \in [0, M_2(m))$. 

For $k \ge 3,$ the coloring $\phi_m^{(k)}$ is defined as follows. For a multiset $\{x_1, \dots, x_k\}$ with $0 \le x_1 \le \dots \le x_k < M_k,$ we consider the vector $\delta(\{x_1, \dots, x_k\}) = (\delta_1, \dots, \delta_{k-1}).$ 
Note that $0 \le \delta_i < M_{k-1}$ for all $i \in [k-1]$. For $k = 3,$ the $4$-coloring is given as:
\[
    \phi^{(3)}_m(\{x_1, x_2, x_3\}) = 
    \begin{cases}
        C_1, &\text{if } \delta_1 < \delta_2 \text{ and } \phi_m^{(2)}(\{\delta_1, \delta_2\}) \text{ is red;}\\
        C_2, &\text{if } \delta_1 < \delta_2 \text{ and } \phi_m^{(2)}(\{\delta_1, \delta_2\}) \text{ is blue;}\\
        C_3, &\text{if } \delta_1 \ge \delta_2 \text{ and } \phi_m^{(2)}(\{\delta_1, \delta_2\}) \text{ is red;}\\
        C_4, &\text{if } \delta_1 \ge \delta_2 \text{ and } \phi_m^{(2)}(\{\delta_1, \delta_2\}) \text{ is blue;}\\
        C_1 &\text{if } x=y=z.
    \end{cases}
\]

Let us remark that $\delta_1 = \delta_2$ only when $x = y = z,$ in which case $\delta_1 = \delta_2 = 0$, thus the above definition covers all cases.

If $x_1 < x_k,$ we denote by $\argmax_{i \in [k-1]} \delta_i$ the unique index $j \in [k-1]$ such that $\delta_j = \max_{i \in [k-1]} \delta_i$, where the uniqueness follows from Properties~\ref{prop:not-equal}~and~\ref{prop:maximum}. In the following, a monotone sequence is a non-decreasing or a non-increasing sequence. For $k \ge 4,$ the coloring is given as:

\[ 
    \phi^{(k)}_m(\{x_1, \dots, x_k\}) = 
\begin{cases}
    \phi^{(k-1)}_m(\{\delta_1, \dots, \delta_{k-1}\}), &\text{if } \delta \text{ is a monotone sequence;}\\
    C_1, &\text{if } \delta \text{ is not monotone and } \argmax_{i \in [k-1]} \delta_i \in \{1, k-1\}; \\
    C_2, &\text{if } \argmax_{i \in [k-1]} \delta_i \not\in \{1, k-1\};\\
\end{cases}
\]


Given a positive integer $b$, a $k$-graph $H$ and a mapping $h \colon H \rightarrow [0, \dots, M_k(m)),$ we say that $h$ is an embedding of $H$ into $\phi^{(k)}_m[b]$ if for all $y \in \{0, \dots, M_k(m)-1\},$ we have $|h^{-1}(y)| \le b$. We say that $h$ is an almost monochromatic embedding of $H$ into $\phi^{(k)}_m[b]$ if there is a color $C_q$ such that for all edges $e = \{v_1, \dots, v_k\} \in E(H)$, either the vertices $h(v_1), \dots, h(v_k)$ are not all distinct or $\phi_m^{(k)}(\{h(v_1), \dots, h(v_k)\}) = C_q.$ We say that $h$ is a monochromatic embedding if additionally $\phi^{(k)}(\{h(v_1), \dots, h(v_k)\}) = C_q$ also when $h(v_1), \dots, h(v_k)$ are not all distinct. 

Observe that if there is no monochromatic embedding of $H$ into $\phi^{(k)}_m[b]$, then $R(H; 4) \ge M_k \cdot b$. Indeed, we can define a coloring $\Psi$ of the complete $k$-graph with vertex set $\{0, \dots, M_k-1\} \times [b],$ where a $k$-set $\{(x_1, y_1), \dots, (x_k, y_k)\}$ is colored by $\phi^{(k)}_m(\{x_1, x_2, \dots, x_k\}).$ Then, a monochromatic embedding of $H$ into $\phi^{(k)}_m[b]$ precisely corresponds to a monochromatic copy of $H$ in $\Psi$.

\subsection{Proof outline}\label{subsec:outline}
Our proof borrows some ideas from the work of Graham, R\"{o}dl and Ruci\'{n}ski~\cite{graham2000graphs} who proved the lower bound for graphs. Additionally, as a building block, we use a construction by Brada\v{c}, Fox and Sudakov~\cite{bradavc2023ramsey} of $k$-graphs with $m$ edges whose $4$-color Ramsey number is $\tw_k(\Theta(\sqrt{m}))$.

The $k$-graph $H$ in Theorem~\ref{thm:main} is the edge union of two hypergraphs. The first one, which we shall call $H_R$, where $R$ stands for ``random'', is obtained from the binomial random hypergraph with average degree $O(\Delta)$ by removing high degree vertices. The second one, which we denote by $H_E$, where $E$ stands for ``expander'', is obtained by taking $O(n / \Delta)$ random copies of a $k$-graph $H_0$ on $O(\Delta)$ vertices with maximum degree $O(\Delta)$. The construction of our gadget $H_0$ is a small modification of the construction from~\cite{bradavc2023ramsey} and is obtained by considering a $d$-regular $2$-graph $F$ with expansion properties (here $d$ depends only on $k$), and putting a $k$-edge for any $k-1$ vertices spanning a tree in $F$ and any distinct $k$-th vertex. 

Informally speaking, $H_0$ has two crucial properties. First, any pair of vertices is contained in a $k$-edge. Second, given any set $W\subset V(H_0)$ of size $\varepsilon |V(H_0)|$ (where $\varepsilon>0$ is not too small with respect to $d$), the auxiliary $(k-1)$-graph $H_0'=H_0'(W)$ where $e$ is an edge if $e\cup \{w\}\in E(H_0)$ for some $w\in W$ ``behaves like'' the $(k-1)$-uniform gadget hypergraph we define from $F$. More specifically, since $F$ is an expander, there will be $U\subset V(F)$ of size $(1-\varepsilon)|V(F)|$ so that each $u\in U$ has a neighbor in $W$, whence $H_0'$ contains a $(k-1)$-uniform edge for every $k-2$ vertices in $U$ spanning a tree in $F$, along with any distinct $(k-1)$-th vertex. Consequently, any two vertices in $U$ belong to a common edge of $H_0'$, and we can further iterate this (defining an analogous auxiliary $(k-2)$-graph $H_0''$, it should again behave like our gadget in uniformity $k-2$). The fact that the hypergraph `behaves similarly' at lower uniformities is what allows us to do stepping-up techniques. 

To prove the theorem, we start by assuming there is a monochromatic embedding of $H = H_R \cup H_E$ into $\phi_m^{(k)}[b_k]$, where $m = \Theta_k(\Delta), b_k = \Theta_k(n / \Delta)$ and we aim to reach a contradiction. Using the properties of the stepping up coloring, we would like to show there is a monochromatic embedding of $H'$ into $\phi_m^{(k-1)}[b_{k-1}]$, where $b_{k-1} = O(b_k)$ and $H'$ is a $(k-1)$-uniform hypergraph which has $\Omega(n)$ vertices and still behaves roughly like the original construction $H$. Applying this argument $k-2$ times, we would reach a graph $G$ which should be present monochromatically in the coloring $\phi^{(2)}_m[b_2],$ where $b_2 = \Theta_k(n/\Delta)$. However, the pseudorandom properties of $H_R\subset H$ would imply that $G \in \calG_m$ (see Definition \ref{G_m} below). This would lead to a contradiction as we will show, using Lemma \ref{lem:graham-rodl-rucinski}, that there is no monochromatic copy of such a graph in $\phi^{(2)}_m[b_2],$ where $b_2 = \Omega_k(b_k)$.

Unfortunately, we cannot show there is a monochromatic copy of the desired $(k-1)$-graph $H'$ as above. Rather, we find a hypergraph $H'$ which also consists of a random part $H_R'$ and an expander part $H_E'$ such that there is an embedding $h'$ of $H'$ into $\phi_m^{(k-1)}[b_{k-1}]$, where $h'$ is a monochromatic embedding of $H_E'$ but only an almost monochromatic embedding of $H_R'$. This turns out to be sufficient for our inductive argument. In fact, the expander part $H_E$ allows us to reduce the uniformity by one while keeping the structure as above, while the random part is only used in the last step, where we need to claim that the final graph is not ``almost monochromatically'' present in $\phi^{(2)}_m[b_2].$

\subsection{Technical lemmas}

We shall use the following lemma of Graham, R\"{o}dl and Ruci\'{n}ski.

\begin{lemma}[\cite{graham2000graphs}] \label{lem:graham-rodl-rucinski}
    Let $m \ge 10^9$ and set $s = 2^{10^{-8} m}.$ There is a coloring of $K_s$, represented by $E_R \cup E_B = E(K_s)$, such that for all functions $w \colon [s] \rightarrow [0,1]$ with $\sum_{i=1}^s w(i) = x \ge m$ and any $c \in \{R, B\},$ we have
    \[ W = \sum_{ij \in E_c} w(i) w(j) < 0.51 \binom{x}{2}. \]
\end{lemma}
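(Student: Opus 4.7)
My approach is the standard probabilistic method: color each edge of $K_s$ red independently with probability $1/2$ (and blue otherwise), and argue that with positive probability the resulting coloring satisfies the conclusion for every weight function $w$.

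The first step is a reduction from arbitrary $w \in [0,1]^s$ to indicator vectors of subsets. For a fixed coloring and color $c$, the map $w \mapsto W(w) = \sum_{ij \in E_c} w(i)w(j)$ is a bilinear form, so its maximum over the polytope $\{w \in [0,1]^s : \sum_i w(i) = x\}$ is attained at an extreme point, and every extreme point of this polytope has at most one coordinate in $(0,1)$. In the purely integral case one has $W(w) = e_c(S)$ for some $S$ of size $x$; in the case of a single fractional coordinate $\alpha$, one can crudely bound $W(w) \le e_c(S \cup \{v\})$ for a set of size $\lceil x \rceil$. Since $x \ge m \ge 10^9$ is large, a short calculation shows this rounding loses only a vanishing multiplicative factor. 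It therefore suffices to prove: with positive probability over the random coloring, every $S \subseteq [s]$ with $|S| \ge m$ satisfies $e_c(S) < \rho \binom{|S|}{2}$ for both colors $c$, for some fixed $\rho < 0.51$ (say $\rho = 0.505$, leaving a small buffer to absorb the fractional loss).

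For a fixed $S$ with $|S| = y \ge m$, the count $e_R(S)$ is $\mathrm{Bin}(\binom{y}{2}, 1/2)$, so the Chernoff--Hoeffding inequality gives $\Pr[e_R(S) \ge \rho \binom{y}{2}] \le \exp(-2(\rho - \tfrac{1}{2})^2 \binom{y}{2})$, which I can write as $2^{-c_0 y^2}$ for an absolute constant $c_0 > 0$. A union bound over both colors and over all subsets of every size $y \ge m$ yields total failure probability at most $2 \sum_{y = m}^{s} \binom{s}{y}\, 2^{-c_0 y^2}$. Using $\binom{s}{y} \le s^y = 2^{y m / 10^8}$, the log of each summand is at most $y m / 10^8 - c_0 y^2$, which is $\le -\tfrac{c_0}{2} y^2$ for all $y \ge m$ as soon as $10^{-8}$ is small relative to $c_0$ (which it is). Summing the resulting rapidly decaying series gives a total well below $1$, so a good coloring exists.

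The main technical point is the calibration of constants in the continuous-to-discrete reduction. The buffer $\rho - \tfrac{1}{2}$ must be large enough that the integer-case bound leaves slack to absorb the loss from the single fractional coordinate, yet small enough that the Chernoff exponent $2(\rho - \tfrac{1}{2})^2 \binom{y}{2}$ dominates the entropy $\log_2 \binom{s}{y} = O(y m / 10^8)$ for every $y \ge m$. With the specific choice $s = 2^{10^{-8} m}$ from the statement, a buffer of order $10^{-2}$ works comfortably, so no sharper analysis of the Chernoff tail is required.
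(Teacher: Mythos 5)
The paper does not prove this lemma itself (it is imported from Graham--R\"odl--Ruci\'nski), so your argument has to stand on its own, and it has a genuine gap at its first step: the reduction from fractional weights to sets. The function $w \mapsto W(w)=\sum_{ij\in E_c}w(i)w(j)$ is a quadratic form whose matrix is (half) the adjacency matrix of the color class; it is indefinite, not convex, so there is no general principle forcing its maximum over the polytope $\{w\in[0,1]^s:\sum_i w(i)=x\}$ to be attained at an extreme point. Concretely, along any feasible direction $e_i-e_j$ with $ij\in E_c$ the function is strictly concave, so maximizers can (and typically do) lie in the relative interior. For example, if the color class has density about $1/2$ (as it does for your random coloring), spreading the weight evenly, $w\equiv x/s$, gives $W\approx \tfrac12\cdot\tfrac{x^2}{2}$, which strictly exceeds the value $e_c(S)\approx\tfrac12\binom{x}{2}$ at every extreme point; the vertex-maximum claim is simply false. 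In this instance the excess is only a factor $\approx x/(x-1)$ --- which is exactly why the lemma's constant is $0.51$ rather than $0.5$ --- but your argument supplies no bound at all on the gap between the fractional and the integral optimum, so the buffer $0.51-0.505$ is not justified by anything you wrote. (The rest of your proof is fine: for a fixed set of size $y\ge m$ the Hoeffding bound $2^{-c_0y^2}$ beats the union bound $\binom{s}{y}\le 2^{10^{-8}my}$ comfortably with your constants.)

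The plan can be repaired by replacing the extreme-point step with a standard averaging/randomized-rounding step. Given $w$, let $S_w$ be the random subset containing each vertex $i$ independently with probability $w(i)$. Then $W=\E[e_c(S_w)]$, $\E[|S_w|]=x$, and $\E\left[\binom{|S_w|}{2}\right]=\sum_{i<j}w(i)w(j)\le x^2/2$. Prove the set statement with a lower size threshold, say $e_c(S)\le 0.505\binom{|S|}{2}$ for all $|S|\ge m/2$ (the same Chernoff-plus-union-bound computation gives this). Then split the expectation: on the event $|S_w|\ge m/2$ use that bound, and on the complementary event use $e_c(S_w)\le\binom{|S_w|}{2}\le\binom{m/2}{2}$ together with the Chernoff bound $\Pr[|S_w|<m/2]\le e^{-m/8}$, which makes that contribution negligible. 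This yields $W\le 0.505\cdot\frac{x^2}{2}+o(1)<0.51\binom{x}{2}$ for all $x\ge m\ge 10^9$. (Alternatively, one can discretize $w$ to a finite net and apply a Hoeffding-type bound directly to the weighted sum $\sum_{ij\in E_c}w(i)w(j)$ for each $w$ in the net.) Some such step is genuinely needed; the linear-programming reduction as you stated it does not hold.
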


\begin{defn}
\label{G_m}
    Let $m$ be a positive integer and let $s = 2^{10^{-8}m}$. Let $\calG_m$ be the set of all graphs $G$ satisfying the following. For every partition $V(G) = V_1 \cup \dots \cup V_s$ with $|V_i| \le |V(G)|/m, \forall i \in [s]$, it holds that
    \[ \sum_{i<j \colon e_G(V_i, V_j) > 0} |V_i||V_j| > 0.55 \binom{|V(G)|}{2}. \]
\end{defn}

The reasoning behind the definition of $\calG_m$ is the following lemma.
\begin{lemma} \label{lem:no-graph-almost-mono}
    Let $m$ be a positive integer and let $G \in \calG_m$. Denoting $b = |V(G)| / m,$ there is no almost monochromatic embedding of $G$ into $\phi^{(2)}_m[b]$.
\end{lemma}
\begin{proof}
    Indeed, suppose, for the sake of contradiction, that there is an almost monochromatic copy of $H$ in $\phi = \phi^{(2)}_m[b]$ with color $q \in \{\mathrm{red}, \mathrm{blue}\}.$ In other words, with $M_2 = M_2(m) = 2^{10^{-8}m},$ there is a mapping $h \colon V(G) \rightarrow [0, M_2)$ and a color $q \in \{\mathrm{red}, \mathrm{blue}\}$ such that for any $i \in [0, M_2),$ $|h^{-1}(i)| \le b$ and for any $uv \in E(G),$ either $h(u) = h(v)$ or $\phi^{(2)}_m(h(u), h(v)) = q$.

    For $i \in [0, M_2),$ let $V_i = h^{-1}(i)$ and let $w_i = |V_i| / b.$ Note that $w_i \in [0, 1]$ for all $i \in [0, \dots, M_2)$ and that
    \[ \sum_{i=0}^{M_2-1} w_i = n / b = m. \]
    Thus, by the properties of $\phi$ inherited from Lemma~\ref{lem:graham-rodl-rucinski}, we have that 
    \[ \sum_{0 \le i < j < M_2, \phi(ij) = q} w_i w_j < 0.51 \binom{m}{2}. \]
    Since $h$ is an almost monochromatic embedding in color $q,$ this implies that
    \[ \sum_{0 \le i < j < M_2 \colon e_G(V_i, V_j) > 0} |V_i||V_j| < 0.51 \binom{m}{2} \cdot b^2 \le 0.51 \binom{n}{2}. \]

    On the other hand, since $G \in \calG_m$, we have 
    \[ \sum_{0 \le i < j < M_2, \colon e_G(V_i, V_j) > 0} |V_i||V_j| > 0.55 \binom{n}{2}, \]
    a contradiction.
\end{proof}

\begin{lemma} \label{lem:random-graph-is-good-whp}
    Let $n, m, d$ be positive integers satisfying $m \ge 10^{9}, \, d \ge m, \, n \ge 2^m$. Then, for the random graph $G \sim G(n, d/n)$, we have
    \[ \Pr[G \not\in \calG_m] \le e^{-dn/10}. \]
\end{lemma}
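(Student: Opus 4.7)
The plan is to apply two successive union bounds: first over partitions $V_1 \cup \dots \cup V_s = V(G)$ with $|V_i|\le n/m$, and then, for each such partition, over subsets $S\subseteq\binom{[s]}{2}$ of potential ``empty'' block pair-indices. Making $S$ explicit---rather than letting it be the random set $\{(i,j):e_G(V_i,V_j)=0\}$---is the key observation: the bipartite products $V_i\times V_j$ for distinct $(i,j)\in S$ are pairwise disjoint subsets of $\binom{V(G)}{2}$, so the event ``no $G$-edge in any of them'' factorizes and has probability exactly $(1-d/n)^{\sum_{(i,j)\in S}|V_i||V_j|}$. This sidesteps Chernoff/Bennett-type inequalities, which would be too weak here because individual $|V_i||V_j|$ can be as large as $(n/m)^2$.

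The first step is to rewrite the failure event in a convenient form. From the identity $\binom{n}{2}=\sum_i\binom{|V_i|}{2}+\sum_{i<j}|V_i||V_j|$ and the crude bound $\sum_i\binom{|V_i|}{2}\le \frac{n^2}{2m}\le \frac{2}{m}\binom{n}{2}$, which is negligible since $m\ge 10^9$, it follows that whenever a partition witnesses $G\notin\calG_m$, the quantity $W := \sum_{i<j,\ e_G(V_i,V_j)=0}|V_i||V_j|$ satisfies $W\ge 0.4\binom{n}{2}$.

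Applying the two-level union bound then gives
\begin{align*}
\Pr[G\notin\calG_m]
&\le \sum_{f:[n]\to[s]}\ \sum_{\substack{S\subseteq\binom{[s]}{2}\\ \sum_{(i,j)\in S}|V_i||V_j|\,\ge\, 0.4\binom{n}{2}}} (1-d/n)^{\sum_{(i,j)\in S}|V_i||V_j|} \\
&\le s^n\cdot 2^{\binom{s}{2}}\cdot \exp\bigl(-0.2\,d(n-1)\bigr),
\end{align*}
where in the first inequality we union-bound over all functions $f:[n]\to[s]$ encoding a partition (with $V_i=f^{-1}(i)$) and over all admissible $S$, and in the second we use $(1-d/n)^{0.4\binom{n}{2}}\le\exp(-0.2\,d(n-1))$. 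Since $s=2^{10^{-8}m}$, $d\ge m$, $n\ge 2^m$ and $m\ge 10^9$, the factor $s^n$ contributes at most $10^{-8}dn$ in the exponent, while $\binom{s}{2}\log 2$ is exponentially smaller than $dn$; the combined exponent is therefore comfortably below $-dn/10$.

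The main conceptual point---and, in effect, the only obstacle---is recognising that the second union bound (over $S$) lets one compute the probability exactly at the cost of a factor $2^{\binom{s}{2}}$, which is utterly dwarfed by $e^{dn/10}$ in the stated regime; the rest is routine bookkeeping of constants.
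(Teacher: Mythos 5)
Your proposal is correct and follows essentially the same route as the paper: a union bound over the $s^n$ partitions and over the $2^{\binom{s}{2}}$ possible sets of empty block pairs, combined with the exact probability $(1-d/n)^{\sum|V_i||V_j|}$ for disjoint bipartite products and the observation that same-block pairs contribute only $O(n^2/m)$, so the empty pairs carry weight a positive constant times $\binom{n}{2}$. The only differences are cosmetic (you get the constant $0.4$ where the paper uses $0.3$, and you make the second union bound explicit where the paper states it in one line).
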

\begin{proof}
    Let $s = 2^{10^{-8}m}$. Fix a partition $V(G) = V_1 \cup \dots \cup V_s$ with $|V_i| \le n/m$ for all $i \in [s].$ Note that the number of pairs of $\binom{[n]}{2}$ in the same set among $V_1, \dots, V_s$ is at most $n^2 / m$, hence if $\sum_{i<j \colon e_G(V_i, V_j) > 0} |V_i||V_j| \le 0.55 \binom{n}{2}$, then $\sum_{i<j \colon e_G(V_i, V_j) = 0} |V_i||V_j| \ge \binom{n}{2} - 0.55 \binom{n}{2} - n^2/m \ge 0.3 \binom{n}{2}$, where we used $m \ge 10^{9}$.

    Taking a union bound over all partitions $V_1 \cup \dots \cup V_s$ and all choices of pairs $(i, j)$ such that $e_G(V_i, V_j) = 0$, we obtain

    \begin{align*}
        \Pr[G \not\in \calG_m] \le s^n 2^{\binom{s}{2}} (1-d/n)^{0.3 \binom{n}{2}} \le s^n 2^{s^2} e^{-0.12 dn} < e^{-dn / 10},
    \end{align*}
    where we used that $d \ge m$, $n \ge 2^m$ and $s=2^{10^{-8}m}$. 
\end{proof}
Next, we introduce some notation for creating lower uniformity hypergraphs. This will be used to analyze our stepping-up process involving our random hypergraph $H_R$.
\begin{defn}
    Let $H$ be a $k$-uniform hypergraph and let $U, W_1, \dots, W_r$ be pairwise disjoint subsets of $V(H)$, where $1 \le r \le k-2$. We define a $(k-r)$-uniform hypergraph $H(U; W_1, \dots, W_r)$ on the vertex set $U$ where a subset $S$ of size $k-r$ of $U$ forms an edge if and only if there are $(w_1, \dots, w_r) \in W_1 \times \dots \times W_r$ such that $S \cup \{w_1, \dots, w_r\} \in E(H)$.
\end{defn}

The following definition states the relevant pseudorandom property of the random part of the construction.

\begin{defn}
    For positive integers $k,m \ge 2$ and real $\alpha \in (0, 1]$, we say that a $k$-uniform hypergraph $H$ is \emph{$(\alpha, m)$-good} if for any pairwise disjoint sets $U, W_1, \dots, W_{k-2} \subseteq V(H)$ with $|U|, |W_1|, \dots, |W_{k-2}| \ge \alpha |V(H)|,$ the graph $H(U; W_1, \dots, W_{k-2})$ is in $\calG_m$.
\end{defn}

Note that if $H$ is an $(\alpha, m)$-good $k$-graph and $U, W$ are disjoint sets vertices of size at least $\alpha |V(H)|,$ then $H(U; W)$ is an $(\alpha', m)$-good $(k-1)$-graph, where $\alpha' = \alpha |V(H)| / |U|$. Similarly, if $H$ is $(\alpha,m)$-good, then the induced subgraph $H[U]$ is $(\alpha',m)$-good with $\alpha' = \alpha\frac{|V(H)|}{|U|}$. 

The following lemma proves, for any $n \ge O(m),$ the existence of an $(\alpha, m)$-good $n$-vertex $k$-graph with maximum degree $O_k(m)$.

\begin{lemma} \label{lem:exists-good-k-graph}
    Let $k \ge 3$ be a given integer and $\alpha \in (0,1].$ Setting $C = \left(\frac{16}{\alpha}\right)^{5k},$ for any $m \ge 10^{9}$, $n \ge C m$ and $\alpha n/2 \geq 2^m$, there exists an $n$-vertex $(\alpha, m)$-good $k$-graph with maximum degree at most $C m.$
\end{lemma}
\begin{proof}
    Set $N = 2n, p = Cm / (4N^{k-1})$, and let $H \sim \calH^{(k)}(N, p)$ be the $N$-vertex binomial random $k$-uniform hypergraph with edge probability $p$. Let $\beta = \alpha/2$ and let us show that with probability at least $1/2$, for any pairwise disjoint sets $U, W_1, \dots, W_{k-2} \subseteq V(H)$ with $|U|, |W_1|, \dots, |W_{k-2}| \ge \alpha |V(H)|$, the graph $H(U; W_1, \dots, W_{k-2})$ is in $\calG_m$. 

    Indeed, consider fixed disjoint sets $U, W_1, \dots, W_{k-2} \subseteq V(H)$ with $|U|, |W_1|, \dots, |W_{k-2}| \ge \beta N$ and let $H' = H(U; W_1, \dots, W_{k-2})$. Denote $T = \prod_{i=1}^{k-2} |W_i|$ and note that $|T|p \le N^{k-2} p \le 1/2.$ For fixed vertices $u, v \in U,$ we have
    \begin{align*}
        \Pr[uv \in E(H')] = \Pr[\Bin(T, p) \ge 1] \ge |T| p - |T|^2 p^2 \ge |T|p / 2 \ge (\beta N)^{k-2} p/2 \ge \frac{8^k m}{\alpha^2 N/2}.
    \end{align*}
    Note that the events $\{ uv \in E(H') \}, \{u,v\} \in \binom{U}{2}$ are mutually independent. Hence, $H'$ is distributed as $G(|U|, p')$ for some $p' \ge \frac{8^k m}{\alpha^2 N/2} \ge \frac{8^k m}{\alpha |U|}$. By Lemma~\ref{lem:random-graph-is-good-whp} it follows that for fixed $U, W_1, \dots, W_{k-2},$ we have 
    \[ \Pr[H' \not\in \calG_m] \le e^{-p'|U|^2/10}\le e^{-8^km|U|/(10\alpha)}\le e^{-8^k m N / 20}. \]
    Using that $m \ge 10^{9}$ and taking a union bound over at most $k^N$ choices for $U, W_1, \dots, W_{k-2},$ we obtain that $H$ is $(\beta, m)$-good with probability at least $3/4$.

    Note that the expected number of edges in $H$ is $\binom{N}{k} p \le \frac{N^k p}{2k} = \frac{CmN}{8k},$ so with probability at least $1/2,$ $e(H) \le \frac{Cm N}{4k}$. Putting it all together, with positive probability $H$ is $(\beta, m)$-good and has at most $\frac{Cm N}{4k}$ edges. Let $H_2$ be the induced subgraph of $H$ on $N/2 = n$ vertices obtained by removing the $N/2$ vertices of largest degree. Recalling that $\beta = \alpha/2,$ since $H$ is $(\beta,m)$-good, it follows that $H_2$ satisfies is $(\alpha, m)$-good. Finally, observe that $\Delta(H_2) \cdot N/2 \le k \cdot e(H),$ which implies $\Delta(H_2) \le \frac{Cm}{2}.$ Putting it all together, with positive probability $H_2$ is the desired hypergraph.
\end{proof}

We proceed to the ``expander'' part of our construction. We shall use the following famous theorem of Friedman~\cite{friedman}, though a much weaker result about the edge distribution of sparse random graphs would work for our purposes. In what follows, for a $d$-regular graph $G$, we denote by $\lambda(G)$ the second largest, in absolute value, eigenvalue of its adjacency matrix.

\begin{theorem}[\cite{friedman}] \label{thm:friedman}
    Fix a real $\eta > 0$ and an even positive integer $d$. Then, as $n$ tends to infinity, for a random $d$-regular $n$-vertex graph $G$, with probability $1 - o(1)$, it holds that $\lambda(G) \le 2 \sqrt{d-1} + \eta$.
\end{theorem}

\begin{lemma} \label{lem:edge-distribution}
    For any integer $k \ge 1$ and any $\varepsilon > 0,$ there are constants $d = d(k, \varepsilon)$ and $M_0 = M_0(k, \varepsilon)$ such that for all $M \ge M_0,$ there is a graph $F$ on $M$ vertices satisfying the following.
    
    \begin{enumerate}[label=F\arabic*)]
        \item \label{prop-large-neighborhood} For any set $U \subseteq V(F)$ such that $|U| \ge \varepsilon M$, the number of vertices with fewer than $k$ neighbors in $U$ is at most $\varepsilon M$.
        \item \label{prop-max-degree} The maximum degree of $F$ is at most $d$.        
    \end{enumerate}
\end{lemma}
\begin{proof}
    Let $d$ be an even integer chosen large enough compared to $k, \varepsilon$. Applying Theorem~\ref{thm:friedman} with $\eta = 1,$ for large enough $M,$ there is an $M$-vertex $d$-regular graph $F$ with $\lambda \coloneqq \lambda(F) \le 2\sqrt{d-1} + 1$. It remains to show that $F$ satisfies \ref{prop-large-neighborhood}. Indeed, let $U$ be an arbitrary set of $\varepsilon M$ vertices of $F$ and let $W$ be the set of vertices with fewer than $k$ neighbors in $U$. By the expander mixing lemma (see for example~\cite[Corollary~9.25]{alon-spencer}), we have
    \[ e_G(U, W) \ge d\frac{|U||W|}{M} - \lambda \sqrt{|U| |W|}. \]
    On the other hand, by definition of $W,$ we have $e_G(U, W) \le k |W|.$ Combining, we have, 
    \[ \lambda \sqrt{|U||W|} \ge (d\varepsilon - k) |W|.\]
    Using $|U| =\varepsilon M,$ $\lambda \le 2\sqrt{d-1} + 1$ and $d$ is large enough, it follows that $|W| \le \varepsilon M,$ as needed.
\end{proof}

As discussed in Subsection~\ref{subsec:outline}, we will essentially place many random copies of a small $k$-uniform hypergraph. The following lemma provides a template for how to place these copies. We shall do this randomly with small alterations.

\begin{lemma} \label{lem:template-hypergraph}
    For any real $\varepsilon > 0,$ there is a constant $C = C( \varepsilon)$ such that for any integers $n, s$ satisfying $n \ge C s^2$ and $s \ge C$, there is a nonempty $s$-uniform $n$-vertex hypergraph $\calT$ satisfying the following:
    \begin{enumerate}[label=P\arabic*)]
        \item $\Delta(\calT) \le C$.
        \item $|e\cap e'|<\varepsilon s$ for all distinct $e,e'\in E(\calT)$.
        \item \label{prop:most-blocks-uncorrelated} For any set $A \subseteq V(\calT)$ with $|A| \ge \varepsilon n,$ there are at most $\varepsilon e(\calT)$ hyperedges $e \in E(\calT)$ such that $\big||e \cap A| - \frac{|A|}{n} \cdot s\big| > \varepsilon s$.
    \end{enumerate}
\end{lemma}
\begin{proof}
    Without loss of generality, assume $\varepsilon < 1/10.$ Let $C = C(\varepsilon)$ be a large constant to be chosen implicitly later and assume that $s \ge C$. Let $K = \frac{\sqrt{C} n}{s}$ and let $X_1, \dots X_K$ be independent multisets each formed by taking a set of $s_0 \coloneqq (1 + \varepsilon / 4) s$ uniformly random elements from $[n]$ chosen with replacement. 

    Let $W \subseteq [n]$ be the set of indices $j$ that lie in more than $C$ sets $X_i, i \in [K]$. For $i \in [K],$ we say that $X_i$ is good if all its elements are distinct, $|X_i\cap X_{i'}|<\varepsilon s$ for all $i'\neq i$, and $|X_i \cap W| \le s_0 - s = \varepsilon s / 4$. Otherwise, $X_i$ is bad. If $X_i$ is good, let $Y_i$ be an arbitrary subset of $X_i \setminus W$ of size $s$ and let $\calT$ be the $s$-uniform hypergraph with vertex set $[n]$ and edge set $\{ Y_i \, \vert \, i \text{ is good}, i \in[K]\}.$ Note that the maximum degree of $\calT$ is at most $C$. We aim to prove that with positive probability $\calT$ is nonempty and satisfies \ref{prop:most-blocks-uncorrelated}.

    Observe that    
    \begin{align*}
        \Pr[|X_i \cap W| > s_0 -  s] &\le \binom{s_0}{s_0 - s} \Pr[\Bin(sK, (s_0-s) / n) \ge C(s_0-s)] \le 2^{s_0} \binom{sK}{C\varepsilon s/4} \left(\frac{\varepsilon s}{4n}\right)^{C \varepsilon s/4}\\
        &\le 2^{s_0} \left( \frac{4eK}{C \varepsilon}\right)^{C \varepsilon s/4} \left(\frac{\varepsilon s}{4n}\right)^{C \varepsilon s/4} = 2^{s_0} \left( \frac{e}{\sqrt{C}}\right)^{C \varepsilon s / 4} < 1 / 24.
    \end{align*}

    The probability that the elements of $X_i$ are not all distinct is at most $\binom{s_0}{2} / n < 1/ 24$. Lastly, the probability $|X_i\cap X_{i'}|\ge \varepsilon s$ for some $i'\neq i$ is at most $K2^{s_0}(s_0/n)^{\varepsilon s}<1/24$. Thus, $X_i$ is good with probability at least $1 - 1 / 8$.

    By Markov's inequality, with probability at least $3/4,$ there are at most $K / 2$ bad sets $X_i$. Observing that $Y_i\neq Y_{i'}$ for distinct good indices $i,i'\in [K]$, we see that $\Pr[e(\calT)\ge K/2] = \Pr[\#(i\in [K]:X_i\text{ is good})\ge K/2]\ge 3/4$.

    Note that if $e(\calT)\ge K/2$, then to have property \ref{prop:most-blocks-uncorrelated}, it is enough that for all $A \subseteq [n]$ of size $|A| \ge \varepsilon n$,
    \begin{equation} \label{eq:lemma-enough}
        \text{ there are at most } \varepsilon K / 4 \text { indices } i \in [K] \text{ such that } \big||X_i \cap A| - \frac{|A|}{n} \cdot s_0 \big| \ge \varepsilon s/4. 
    \end{equation}
    
    Fix $A \subseteq [n]$ with $|A| \ge \varepsilon n$. For fixed $i \in [K],$ we have
    \begin{align*} 
        \Pr\left[\big| |X_i \cap A| - \frac{|A|}{n} \cdot s_0 \big| \ge \varepsilon s / 4 \right] &= \Pr\left[ \Big|\Bin\left(s_0, \frac{|A|}{n}  \right) - \frac{|A|}{n} \cdot s_0 \Big| \ge \varepsilon s / 4\right]\\
        &\le 2 \exp\left( -\frac{ (\varepsilon s/4)^2}{3 \frac{|A|}{n} \cdot s_0}\right) < \exp\left(-\varepsilon^2s / 50 \right),
    \end{align*}

    where we used that $\frac{|A|}{n}\cdot s_0 > \eps s/4$ so that we can apply the Chernoff bound: $\Pr[|\Bin(x,p)- xp|>t]\le 2 \exp(-t^2/ (3 xp))$ which holds for $t\le (3/2) xp$ (e.g. Corollary~2.3~in~\cite{JLRbook}). Thus, the probability that \eqref{eq:lemma-enough} does not hold for $A$ is at most
    \[ \binom{K}{\varepsilon K / 4} \exp\left( -\varepsilon^2 s / 50 \cdot \varepsilon K/4 \right) < 2^K \exp(-\varepsilon^3 sK/200) = 2^K \exp(-\varepsilon^3 \sqrt{C} n / 200) < 4^{-n}. \]

    Taking a union bound, we have that \eqref{eq:lemma-enough} holds for all large sets with probability at least $3/4$.
    
    Thus, with probability at least $1/2$ we have $e(\calT)\ge K/2$ and \eqref{eq:lemma-enough} holds for all relevant sets $A$. As discussed, these conditions imply that $\calT$ satisfies the desired properties, finishing the proof.
\end{proof}

\subsection{The construction}
The main building block of our construction is given in the following definition. It is a variant of the aforementioned construction by Brada\v{c}, Fox and Sudakov~\cite{bradavc2023ramsey}.

\begin{defn}
    For a graph $F$ and $k \ge 2,$ we define a $k$-uniform hypergraph $H = H^{(k)}(F)$ with vertex $V(H) = V(F)$ and 
    \[ E(H) = \{ \{x_1, \dots, x_k\} \, \vert \, F[\{x_1, \dots, x_{k-1}\}] \text{ is connected and } x_1, \dots, x_k \text{ are distinct} \}. \]
\end{defn}
In other words, for any tree on $k-1$ vertices in $F$ and any other vertex, we put a $k$-edge into $H$.

Next we extend this definition by overlaying many of these hypergraphs into a single hypergraph.

\begin{defn}
    Given an integer $n$ and a collection of graphs $\calF$, where $V(F) \subseteq [n], \forall F \in \calF$, we define a hypergraph $H = H^{(k)}(\calF)$ as the $k$-graph with vertex set $[n]$ and edge set $E(H) = \bigcup_{F \in \calF} E(H^{(k)}(F)).$
\end{defn}

For $k \ge 2,$ let $\varepsilon = 10^{-6k}.$ Given $m \ge 10^9$ and $n \ge 2^{m+1} / \varepsilon,$ let $H_R$ be an $(\varepsilon, m)$-good $k$-graph on the vertex set $[n]$ with $\Delta(H_R) \le Cm,$ where $C = \left(16/\varepsilon\right)^{5k},$ whose existence is given by Lemma~\ref{lem:exists-good-k-graph}. Furthermore, let $F$ be a graph on $s = 10^{20k} m$ vertices satisfying \ref{prop-large-neighborhood} with parameter $\varepsilon$ given by Lemma~\ref{lem:edge-distribution}, and note that it has maximum degree bounded by a function of $k$. Let $\calT$ be an $s$-uniform hypergraph on the vertex set $[n]$ given by Lemma~\ref{lem:template-hypergraph} with parameter $\varepsilon$. Let the edges of $\calT$ be $B_1, B_2, \dots, B_{e(\calT)}$. For each $i \in [e(\calT)],$ let $F_i$ be an isomorphic copy of $F$ with vertex set $V(F_i) = B_i$ and let $\calF = \{ F_i \vert \, i \in [e(\calT)]\}$ denote the family of all of these graphs. Let $H_E = H^{(k)}(\calF)$ and finally let $H$ be the $k$-graph with vertex set $[n]$ and $E(H) = E(H_R) \cup E(H_E)$. Note that $\Delta(H) \le C_k m,$ where $C_k$ depends only on $k$.

In the next subsection, we will show that when $H_R,H_E$ exist, that $H$ has four color Ramsey number at least $M_k(m) \cdot 10^{-10k}\cdot (n/m)$. This will quickly imply Theorem~\ref{thm:main}, as we explain below.

\subsection{Proof of Theorem~\ref{thm:main}}\label{subsec:final}
Clearly we may assume that $\Delta$ is sufficiently large in terms of $k$, recalling the statement of Theorem~\ref{thm:main} is trivial unless $\Delta\ge 1/c_k$. Now, let $C_k$ be a sufficiently large constant. Furthermore, we may assume that $n \ge C_k \cdot \Delta,$ for some large constant $C_k$ as otherwise we may apply the argument with $\Delta' = \Delta / C_k$ at the cost of decreasing the constant $c_k$ in the result (recall that we initially assume $n\ge \Delta$). If $n \le 2^{C_k \Delta},$ then by a result of Brada\v{c}, Fox and Sudakov~\cite{bradavc2023ramsey}, there is a $k$-graph with at most $C_k \Delta$ vertices, maximum degree at most $\Delta,$ and $4$-color Ramsey number at least $\tw_k(c_k' \Delta) > \tw_k(c_k''\Delta) \cdot n,$ where $c_k', c_k''$ are positive constants depending only on $k$.
    
So we may assume that $n \ge 2^{C_k \Delta}$ for an arbitrary constant $C_k$ depending only on $k$. Let $H$ be the $k$-graph constructed in the previous subsection with $m = (\varepsilon/16)^{5k}\Delta$ (this will exist, as we now have $n\ge 2^{m+1}/\varepsilon$). We shall prove that there is no monochromatic embedding of $H$ into $\phi_m^{(k)}[b_k],$ where $b_k = (10^5)^{-2k+4} \cdot n/m.$ When $m$ is large with respect to $k$, we have $M_k(m)\ge \tw_k(10^{-10} m)$.

Consequently, this would imply Theorem~\ref{thm:main}, since $M_k(m) \cdot b_k > \tw_k(10^{-10} m) \cdot (10^5)^{-2k+4} n / m > \tw_k(c_k \Delta) \cdot n,$ and $\Delta(H) \le C_k' m,$ where the constants $c_k, C_k' > 0$ depend only on $k$. Suppose, for the sake of contradiction, that there is a monochromatic embedding of $H$ into $\phi_m^{(k)}[b_k]$. Our proof proceeds by induction on the uniformity. The step of the induction is given by the following key claim.

\begin{claim} \label{claim:induction}
    Let $u \in [2,k]$ be an integer and denote $\alpha_u = (10^5)^{-k+u}$. Then, there are disjoint sets $U^u, W_{u+1}, \dots, W_k \subseteq [n]$ and sets $B_1^u, \dots, B^u_{[e(\calT)]}$ such that $B_i^u \subseteq B_i \cap U^u$ for all $i \in [e(\calT)]$ satisfying the following.
    
    \begin{enumerate}[label=\alph*)]
        \item \label{claim:induction-point1} $|U^u| = \alpha_u n$ and $|W_i| = \alpha_{i-1} n$, for all $i \in [u+1, k].$
        \item \label{claim:induction-point2} For every $i \in [e(\calT)],$ either $B^u_i = \emptyset$ or $|B_i \cap U^u| \ge (1-\varepsilon) \alpha_u s$ and $|(B_i \setminus B_i^u) \cap U^u| \le 2(k-u) \varepsilon s$. Furthermore, $|\{i \, \vert \, B^u_i = \emptyset\}| \le 4 (k-u) \varepsilon e(\calT).$
        \item \label{claim:induction-point3} Denote $H_R^u \coloneqq H(U^u; W_{u+1}, \dots, W_k)$, $H_E^u \coloneqq H^{(u)}( \{F_i[B^u_i] \, \vert \, i \in [e(\calT)]\})$, and $b_u \coloneqq (10^5)^{-k -u+4} \cdot n/m$ and $\phi^u = \phi_m^{(u)}[b_u]$. Then, there is a mapping $h^u \colon U^u \rightarrow [0, M_u(m))$ such that $h^u$ is an almost monochromatic embedding of $H_R^u$ into $\phi^u[b_u]$ and $h^u$ is a monochromatic embedding of $H_E^u$ into $\phi^u[b_u]$.
    \end{enumerate}
\end{claim}

Before proving Claim~\ref{claim:induction}, let us show that it leads to a contradiction. Indeed, let $U, W_3, \dots, W_k$ be the sets given by Claim~\ref{claim:induction} with $u = 2$ such that for $H_R^2 = H(U; W_3, \dots, W_k)$, there is an almost monochromatic embedding of $H_R^2$ into $\phi_m^{(2)}[b_2]$. Recalling that $H_R$ is $(10^{-5k}, m)$-good and that $|U|, |W_3|, \dots, |W_k| \ge 10^{-5k} \cdot n$, it follows that $H_R^2 \in \calG_m$. Thus, Lemma~\ref{lem:no-graph-almost-mono} implies that there is no almost monochromatic embedding of $H_R^2$ into $\phi_m^{(2)}[|V(H_R^2)| / m]$, which is a contradiction since $|V(H_R^2)| / m = |U| / m = \alpha_2 n / m = b_2.$

\begin{proof}[Proof~of~Claim~\ref{claim:induction}]
    We prove the claim by reverse induction on $u$. For $u = k$, the claim is given by our original assumption by taking $U^k = [n], B^{k}_i = B_i,$ for all $i \in [e(\calT)]$ and $H^{k}_R = H_R$.

    Now, assume that $3 \le u \le k$ and we are given sets $U^u, W_{u+1}, \dots, W_k$, sets $B_i^u,$ for $i \in [e(\calT)],$ as well as the embedding $h^u$ satisfying the claim for $u$. We wish to prove the claim for $u-1$.

    Without loss of generality, we shall assume that $U^u = [n_u]$, where $n_u = (10^5)^{-k+u} \cdot n$ and that the mapping $h^u$ is given by $h^u(i) = x_i$, such that $0 \le x_1 \le x_2 \le \dots \le x_{n_u} < M,$ where $M = M_u(m)$ was previously defined.

    For two sets of integers $A$ and $A'$ we write $A < A'$ if $a < a'$ for all $a \in A, a' \in A'$.
        
    For $i \in [n_u-1],$ let $\delta_i = \delta(x_i, x_{i+1}).$ We shall repeatedly use the following simple facts. Given an interval $[\ell, r] \subseteq [n_u-1]$ with $r - \ell > b_u,$ let us denote $\delta^* = \max_{i \in [\ell, r]} \delta_i.$ Then, there is a unique index $i^*$ such that $\delta_{i^*} = \delta^*.$ Indeed, since $x_\ell < x_{r+1},$ we have $\max_{i \in [\ell,r]}\delta_i = \delta(x_\ell, x_{r+1}) \ge 1$. If there are two indices $i_1, i_2 \in [\ell, r], i_1 < i_2$ with $\delta_{i_1} = \delta_{i_2} = \delta^*,$ then using Property~\ref{prop:maximum}, we have $\delta^* = \delta(x_{\ell}, x_{i_1+1}) = \delta(x_{i_1+1}, x_{r+1}) = \delta(x_{\ell}, x_{r+1})$, which contradicts Property~\ref{prop:not-equal}. Furthermore, given an interval $J \subseteq [\ell, r]$ with $i^* \not\in J,$ we clearly have $\max_{j \in J} \delta_j < \delta^*$.    
    
    We run the following procedure in steps $j = 1, \dots, T$. At each step, we have two parameters $\ell_j, r_j$ with $1 \le \ell_j < r_j \le n_u$. If $r_j - \ell_j < n_u/2000$, we stop the process and declare our total number of steps $T$ to be $j$. Otherwise, we shall consider the interval $x_{\ell_j}, \dots, x_{r_j}.$ We start with $\ell_1 = 1, r_1 = n_u$.

    At step $j$, let $p_j$ denote the unique index such that $\delta_{p_j} = \max\{ \delta_i \, \vert \, i \in [\ell_j, r_j-1]\}$. Since for $u \geq 3$, $b_u \le 10^{-5} n_u < n_u/2000,$ we have $r_j > \ell_j + b_u,$ so $p_j$ really is unique by the discussion above. If $p_j - \ell_j \ge r_j - p_j$, we call $j$ a \emph{left step} and set $\ell_{j+1} = \ell_j, r_{j+1} = p_j$, and otherwise we call $j$ a \emph{right step} and set $\ell_{j+1} = p_j+1, r_{j+1} = r_j.$ We proceed to step $j+1$. Note that at every step the length of the new interval $[\ell_{j+1}, r_{j+1}]$ is at least half of the length of the previous interval $[\ell_j, r_j]$. In particular $r_T - \ell_T \ge n_u/4000$.

    We will use the following simple fact.
    \begin{claim} ~\label{cl:find-two}
        At least one of the following holds.
        \begin{enumerate}[label=C\arabic*)]
            \item \label{case:right-steps}
            There are indices $t_1, t_2, t_3$ with $1 < t_1 < t_2 < t_3 \le T$ such that $t_1-1$, $t_2-1$ and $t_3-1$ are right steps and $\ell_{t_1} - \ell_1, \ell_{t_2} - \ell_{t_1}, \ell_{t_3} - \ell_{t_2} \ge n_u/1024$.
            \item \label{case:left-steps} There are indices $t_1, t_2, t_3$ with $1 < t_1 < t_2 < t_3 \le T$ such that $t_1-1$, $t_2-1$ and $t_3-1$ are left steps and $r_1 - r_{t_1}, r_{t_1} - r_{t_2}, r_{t_2} - r_{t_3} \ge n_u/1024$.    
        \end{enumerate}        
    \end{claim}
    \begin{proof}
        Let $a_0 = 1, b_0 = n_u$ and for $i \in [5],$ let $t_i'$ be the minimum index $j \in [T]$ such that $\ell_j - a_{i-1} \ge n_u / 1024$ or $b_{i-1} - r_j \ge n_u / 1024,$ where we will show the existence of such an index shortly. Set $a_i = \ell_{t_i'}$ and $b_i = r_{t_i'}$. Observe that if $a_i - a_{i-1} \ge n_u / 1024,$ then $t_i'-1$ is a right step and if $b_i - b_{i-1} \ge n_u / 1024,$ then $t_i'-1$ is a left step. Furthermore, by the minimality of $t_i',$ note that $b_i - a_i \ge (b_{i-1} - a_{i-1} - 2\cdot n_u/1024) / 2.$

        A simple induction shows that for $i \in [5],$ we have $b_i - a_i +1\ge (b_{i-1} - a_{i-1} + 1) / 4.$ Indeed, if $i \in [5],$ and the above holds for all $i' < i,$ then $b_{i-1} - a_{i-1} + 1\ge n_u / 4^{i-1} \ge n_u / 256$. Since $n_u/256 > 2 \cdot n_u / 1000 + n_u / 2000,$ $t_i'$ is well-defined and we have $b_i - a_i + 1 \ge 1/2 + (b_{i-1} - a_{i-1} + 1 - 2\cdot n_u/1024) / 2 \ge (b_{i-1} - a_{i-1} + 1) / 4,$ as needed.

        Finally, by the pigeonhole principle, there are either three indices $i_1, i_2, i_3$ with $1 \le i_1 < i_2 < i_3 \le 5$ such that for all $j \in [3],$ we have $a_{i_j} - a_{i_j-1} \ge n_u/1024$ or for all $j \in [3],$ we have $b_{i_j}  - b_{i_j-1} \ge n_u/1024$. For $j \in [3],$ taking $t_j = t_{i_j}'$, in the former case these indices satisfy~\ref{case:right-steps}, whereas in the latter case, they satisfy~\ref{case:left-steps}, finishing the proof of the claim.
    \end{proof}
    Since the two cases in Claim~\ref{cl:find-two} are symmetric, without loss of generality, we shall assume that there are indices $1 < t_1 < t_2 < t_3 \le T$ such that $\ell_{t_1} - \ell_1, \ell_{t_2} - \ell_{t_1}, \ell_{t_3} - \ell_{t_2} \ge n_u/1024$.

    Let $I_1 = [1, \ell_{t_1}-1], I_2 = [\ell_{t_1}, \ell_{t_2}-1]$ and $I_3 = [\ell_{t_2}, \ell_{t_3}-1].$ Furthermore, let $R \subseteq [t_1, t_2)$ denote the set of right steps between $t_1$ and $t_2$. For each $j \in R,$ let $S_j = [\ell_j, p_j]$ and note that $S_j < S_{j'}$ for $j < j'$ and $\bigcup_{j \in R} S_j = I_2$.

    For a set $A \subseteq [n_u]$ and $i \in [e(\calT)]$, we say the sets $A$ and $B_i$ are \emph{correlated} if $\big| |A \cap B_i| - \frac{|A|}{n} \cdot s| \ge \varepsilon s.$ By~\ref{prop:most-blocks-uncorrelated}, for any set $A$ of size at least $\varepsilon n,$ there are at most $\varepsilon e(\calT)$ indices $i$ such that $A$ and $B_i$ are correlated.
    
    We shall now do an initial round of `pruning' to our collection of blocks $\{B_i^u:i\in [e(\mathcal{T})]\}$. If $B_i$ is correlated with $I_j$ for some $j \in [3],$ let $B_i' = \emptyset.$ Otherwise, recalling that we have placed a copy $F_i$ of $F$ on the vertex set $B_i$, let $B_i'$ be the set of all vertices in $B^u_i \cap I_2$ that in the graph $F_i$ have at least $k$ neighbors in each of the sets $B^u_i \cap I_1$ and $B^u_i \cap I_3$. 

    Assume that $i \in [e(\calT)]$ is such that $B^u_i \neq \emptyset$ and for all $j \in [3],$ the sets $B_i$ and $I_j$ are not correlated. Then, for $j \in [3],$
    \[ |B^u_i \cap I_j| = |B_i \cap I_j| - |(B_i \setminus B^u_i) \cap I_j| \ge \left(\frac{|I_j|}{n} s -\varepsilon s\right) - 2(k-u) \varepsilon s \ge \left(\frac{\alpha_u}{1024} - (2(k-u)+1) \varepsilon\right) s > \varepsilon s, \]
    where we used that $B_i$ is not correlated with $I_j$. By~\ref{prop-large-neighborhood}, there are at most $2 \varepsilon s$ vertices in $B^u_i \cap I_2$ with fewer than $k$ neighbors in $I_1$ or fewer than $k$ neighbors in $I_3$. Hence, 
    \begin{equation} \label{eq:diff-in-I2}
        |(B_i \setminus B_i') \cap I_2| \le |(B_i \setminus B_i^u) \cap U^u| + 2 \varepsilon s \le 2(k-u+1) \varepsilon s.        
    \end{equation}
    In particular,     
    \[ |B_i'| = |B_i \cap I_2| - |(B_i \setminus B_i') \cap I_2| \ge  \frac{|I_2|}{n} s-\varepsilon s - 2(k-u+1) \varepsilon s > 10^{-5}\alpha_u s > \varepsilon s.
    \]
    
    Thus for $i\in [e(\mathcal{T})]$ we have shown that $B_i' = \emptyset$ if and only if either $B_i^u=\emptyset$ (which happens for at most $4(k-u)\varepsilon e(\mathcal{T})$ indices by assumption) or $B_i$ is correlated with $I_j$ for some $j\in [3]$ (which happens for at most $3\varepsilon e(\mathcal{T})$ total indices). Together we get
    \begin{equation}\label{eq:many nonempty}
        |\{i\in [e(\mathcal{T})]: B_i'=\emptyset\}|\le (4(k-u)+3)\varepsilon e(\mathcal{T}).
    \end{equation}
    
    \begin{claim} \label{cl:no-two-in-same-interval}
        For any $i \in [e(\calT)]$ and any $j \in R,$ it holds that $|B'_i \cap S_j| \le 1.$
    \end{claim}
    \begin{proof}
        To simplify notation, in the proof of this claim we write $h = h^u$ and $\phi = \phi^u$.
        
        Assume, for the sake of contradiction, that $|B'_i \cap S_j| \ge 2,$ for some $i \in [e(\calT)]$ and $j \in R.$ Let $v, w$ be two distinct elements in $B'_i \cap S_j$ such that $v < w$. Recall that, by definition of $B'_i,$ each of $v$ and $w$ has at least $k$ neighbors in $I_1$ and in $I_3$ in the graph $F_i$. Using this, we exhibit two $k$-edges of $H_E^u$ which are mapped by $h$ to edges colored differently under $\phi$, contradicting our assumption.

        By Property~\ref{prop:maximum}, the following hold (see Figure~\ref{fig:intervals} for an illustration):
        \begin{enumerate}[label=D\arabic*)]
            \item \label{D1} $\delta(x_v, x_w) < \delta_{p_j} < \delta_{\ell_{t_1}-1}$;
            \item \label{D2} for any $y \in I_1$, $\delta(x_y, x_v) \ge \delta_{\ell_{t_1}-1}$;
            \item \label{D3} for any $z \in I_3,$ $\delta(x_w, x_z) = \delta_{p_j} \in [\delta_{\ell_{t_2-1}}, \delta_{\ell_{t_1-1}})$;
            \item \label{D4} for any $z, z' \in I_3$, $\delta(x_z, x_{z'}) < \delta_{\ell_{t_2-1}}$.
        \end{enumerate}

        \begin{figure}[h]
            \centering
            \includegraphics[width=0.8\textwidth]{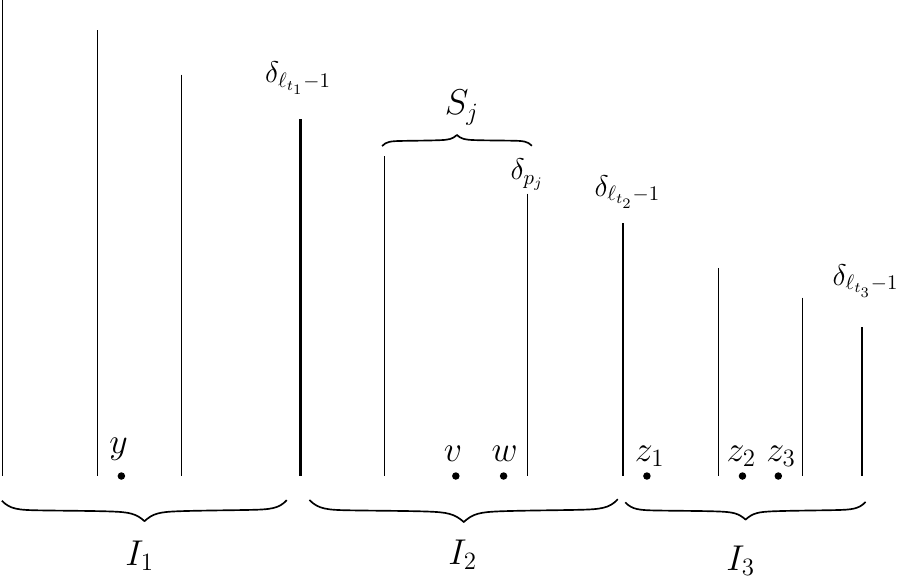}
            \caption{A picture representing the intervals $I_1, I_2, I_3$ and the relevant vertices. Similarly as in Figure~\ref{fig:deltas}, the vertical lines correspond to $\delta_i$ for some $i \in [n_u-1]$.}
            \label{fig:intervals}
        \end{figure}
        
        First, consider the case $u = 3$. Recall that there are $y \in I_1$ and $z \in I_3$ such that $vy, vz \in E(F_i)$ and thus $e_1 \coloneqq \{y,v,w\}$ and $e_2 \coloneqq \{ v, w, z\}$ are both edges of $H_E^u$. By \ref{D1} and \ref{D2}, we have $\delta(x_y, x_v) \ge \delta_{\ell_{t_1}-1} > \delta(x_v,x_w)$, which implies that $\phi(h(e_1)) = \phi(\{h(y), h(v), h(w)\}) = \phi(\{x_y, x_v, x_w\}) \in \{ C_3, C_4\}.$ On the other hand, by \ref{D1} and \ref{D3}, $\delta(x_v, x_w) < \delta_{p_j} = \delta(x_w, x_z)$, implying $\phi(h(e_2)) = \phi(\{h(v), h(w), h(z)\}) = \phi(\{x_v, x_w, x_z\}) \in \{ C_1, C_2 \}.$ This contradicts our assumption that $h$ is a monochromatic embedding of $H_E^u$ into $\phi$.
        
        Now, consider the case $u \ge 4$. Similarly, there are $y \in I_1$ and $z_1, \dots, z_{k-2} \in I_3$, where $z_1 < z_2 < \dots < z_{k-2}$, such that $vy, vz_1, vz_2, \dots, vz_{k-2} \in E(F_i)$. Thus $e_1 \coloneqq \{ y, v, w, z_1, \dots, z_{k-3}\}$ and $e_2 \coloneqq \{ v, w, z_1, \dots, z_{k-2}\}$ are both edges of $H_E^u$. By \ref{D2}, we have $\delta(x_y, x_v) \ge \delta_{\ell_{t_1-1}}.$ By \ref{D1} and \ref{D3}, we have $\delta(x_v, x_w) < \delta_{p_j} = \delta(x_w, x_{z_1}).$ By \ref{D4}, for all $j \in [k-3],$ $\delta(z_j, z_{j+1}) < \delta_{\ell_{t_2-1}} \le \delta_{p_j} < \delta_{\ell_{t_1}-1}.$ All of this implies that $\phi(h(e_1)) = \phi(\{x_y, x_v, x_w, x_{z_1}, \dots, x_{z_{k-3}}\}) = C_1$ (indeed, writing $\delta_1':= \delta(x_y,x_v),\delta_2'= \delta(x_v,x_w),\dots$, this is a non-monotone sequence with $\argmax_{i\in [k-1]}\delta_i' =1$, corresponding to $\delta_1'=\delta(x_y,x_v)$). On the other hand, the same observations imply that $\phi(h(e_2)) = \phi(\{x_v, x_w, x_{z_1}, \dots, x_{z_{k-2}}\}) = C_2$ (again we get a non-monotone sequence $\delta_1',\dots,\delta_{k-1}'$, now with $\argmax_{i\in [k-1]}\delta'_i =2$, corresponding to $\delta_2'=\delta(x_w,x_{z_1})$). Again, we have reached a contradiction.
    \end{proof}    

    Recall that $b_{u-1} = (10^5)^{-k-u+3} \cdot n/m$. Let $R_S = \{ j \in R \, \vert \, |S_j| \le b_{u-1} \},$ be the set of small intervals and $R_B = \{ j \in R \, \vert \, |S_j| > b_{u-1} \}$ be the set of big intervals.

    Suppose that $\sum_{j \in R_B} |S_j| \ge n_u / 10^4.$ Denote $S = \bigcup_{j \in R_B} S_j$. By the properties of $\calT$ given by Lemma~\ref{lem:template-hypergraph}, there are at least $(1 - \varepsilon) e(\calT)$ indices $i$ such that $\big| |B_i \cap S| - |S|/n \cdot s \big| \le \varepsilon s.$ Thus, there is an index $i$ such that $B'_i \neq \emptyset$ and $\big| |B_i \cap S| - |S|/n \cdot s \big| \le \varepsilon s$. It follows that
    \[ |B'_i \cap S| \ge |B_i \cap S| - |(B_i \setminus B_i') \cap U| \ge (|S| / n -\varepsilon) \cdot s - 2(k-u+1)\varepsilon \cdot s > \alpha_u s / (2\cdot 10^4). \]
    
    Observe that $|R_B| \le n_u / b_{u-1} = (10^5)^{2u-3} m <10^{10k}m< |B'_i|,$ since $|B'_i| > \alpha_u s / 20000 > 10^{20k-5k-5}m$. By the pigeonhole principle, for some $j \in R_B,$ it holds that $|B'_i \cap S_j| \ge 2,$ contradicting Claim~\ref{cl:no-two-in-same-interval}.

    Therefore, we have $\sum_{j \in R_B} |S_j| \le n_u / 10^4$ and thus $\sum_{j \in R_S} |S_j| \ge |I_2| - n_u / 10^4\ge n_u / 10^4.$ Let $U^{u-1}$ be an arbitrary subset of $ \bigcup_{j \in R_S} S_j$ of size $n_{u-1}$ and let $W_u$ be an arbitrary subset of $I_3$ of size $n_{u-1}$, which exists since $n_{u-1} = 10^{-5} n_u$.
    
     For $i \in [e(\calT)],$ let
    \[ B^{u-1}_i =
        \begin{cases}
            B'_i \cap U^{u-1} &\text{if } U^{u-1} \text{ and } B_i \text{ are not correlated};\\
            \emptyset &\text{otherwise.}
        \end{cases}
    \]

    Note that if $B_i' \neq \emptyset$ and $U^{u-1}$ and $B_i$ are not correlated, then $|B_i \cap U^{u-1}| \ge \alpha_{u-1} s - \varepsilon s$ and 
    \[
        |(B_i \setminus B_i^{u-1}) \cap U^{u-1}| \le |(B_i \setminus B_i') \cap I_2| \le 2(k-u + 1) \varepsilon s\](recall (\ref{eq:diff-in-I2})).
    Moreover, there are at most $\varepsilon e(\calT)$ indices $i\in [e(\calT)]$ which are correlated with $U^{u-1}$, and at most $(4k+3)\varepsilon e(\calT)$ indices $i\in [e(\calT)]$ where $B_i'=\emptyset$ (recall ($\ref{eq:many nonempty}$)). So the above holds for at least $\big(1 - 4(k-u+1) \varepsilon\big) e(\calT)$ of the indices $i \in [e(\calT)],$ proving~\ref{claim:induction-point2} for $u-1$.


    We have defined the sets $U^{u-1}, W_u, \dots, W_k$ and $(B_i^{u-1})_{i \in [e(\calT)]}$ and shown they satisfy ~\ref{claim:induction-point1}~and~\ref{claim:induction-point2}. It remains to show~\ref{claim:induction-point3}.
    Denote $j^* = \ell_{t_2}$ and $x^* = x_{j^*}$. We define the embedding $h^{u-1} \colon H^{u-1} \rightarrow [0, M_{u-1}(m) - 1]$ as follows. For $i \in U^{u-1},$ let 
    \begin{equation} \label{eq:new-embedding}
       h^{u-1}(i) \coloneqq \delta(x_i, x^*) = \max_{z \in [i, j^*)} \delta_z,
   \end{equation}
   where the equality holds by~\ref{prop:maximum}. Note that indeed, $h^{u-1}$ only takes values in $[0,M_{u-1}(m)-1]$. This is because $x_i,x^*$ both belong to $[0,M_u(m)-1]= [0,2^{M_{u-1}(m)-1}-1]$, combined with the fact that $\delta(a,b)\le \lceil \log_2(\max\{a,b\}+1)\rceil$ for non-negative integers $a,b$.
    
    Observe that
    
    \begin{equation} \label{eq:deltas-from-Sj}
    j \in R_S, i \in S_j \implies \delta(x_i, x^*) = \max_{z \in [i, \ell_{t_2})} \delta_z \in [\delta_{p_j}, \delta_{\ell_j-1}-1].
    \end{equation}
    
    Therefore, $h^{u-1}(i) > h^{u-1}(i')$ for any $i \in S_j, i' \in S_{j'}$, where $ j,j' \in R_S$ and $j<j'.$ This implies that $h^{u-1}$ maps at most $b_{u-1}$ elements to the same value.
    
    It remains to show that $h^{u-1}$ is an almost monochromatic embedding of $H_R^{u-1}$ and a monochromatic embedding of $H_E^{u-1}$ into $\phi^{u-1}_m[b_{u-1}].$

    We will first show the former. Suppose that $h^{u}(H_R^{u})$ is almost monochromatic in color $C_q$. Recall that $H_R^{u-1}$ is the $(u-1)$-uniform hypergraph obtained by taking any $(u-1)$-set in $U^{u-1}$ that can be extended to an edge of $H_R$ by adding one vertex from each of $W_u, \dots, W_k$. Equivalently, a set $e \in \binom{U^{u-1}}{u-1}$ is an edge of $H_R^{u-1}$ if and only if there is $v \in W_u$ such that $e \cup \{v_e\} \in E(H_R^u)$. Recall that $W_u = I_3.$ Consider an edge $e = \{v_1, \dots, v_{u-1}\} \in E(H_R^{u-1}),$ and a corresponding vertex $v_u \in I_3$ such that $e \cup \{v_u\} \in E(H_R^u)$. Without loss of generality, we have $v_1 < v_2 < \dots < v_u$. From~\eqref{eq:new-embedding}, it follows that $h^{u-1}(v_1) \ge h^{u-1}(v_2) \ge \dots \ge h^{u-1}(v_{u-1})$. We may assume that $h^{u-1}(v_1) > h^{u-1}(v_2)> \dots > h^{u-1}(v_{u-1}),$ otherwise there is nothing to prove since we only aim to show that $h^{u-1}$ is an almost monochromatic embedding of $H_R^{u-1}$.

    Consider $i \in [u-2]$. Since 
    \[ \max_{z \in [v_i, j^*)} \delta_z = h^{u-1}(v_i) > h^{u-1}(v_{i+1}) = \max_{z \in [v_{i+1}, j^*)} \delta_z, \]
    it follows that
    \[ \delta(x_{v_i}, x_{v_{i+1}}) = \max_{z \in [v_i, v_{i+1})} \delta_z = \max_{z \in [v_i, j^*)} \delta_z = h^{u-1}(v_i). \]
    Similarly, 
    
    \[ \delta(x_{v_{u-1}}, x_{v_u}) = \max_{z \in [v_{u-1}, v_u)} \delta_z = \max_{z \in [v_{u-1}, j^*)} \delta_z = h^{u-1}(v_{u-1}), \]
    since $\delta_z < \delta_{j^*}$ for all $z \in [\ell_{t_2}, \ell_{t_3})$. 
    %
    Therefore, $\delta(x_{v_1}, x_{v_2}) > \delta(x_{v_2}, x_{v_3}) > \dots > \delta(x_{v_{u-1}}, x_{v_u})$, so
    \[ \phi^{(u)}(\{x_{v_1}, \dots, x_{v_u}\}) = \phi^{(u-1)}( \{\delta(x_{v_1}, x_{v_2}), \delta(x_{v_2}, x_{v_3}),  \dots , \delta(x_{v_{u-1}}, x_{v_u})\}), \]
    if $u \ge 4$ and if $u = 3,$ then
    
    \[ \phi^{(3)}(\{x_{v_1}, x_{v_2}, x_{v_3}\}) =
        \begin{cases}
            C_3, &\text{if } \phi^{(2)}( \{\delta(x_{v_1}, x_{v_2}), \delta(x_{v_2}, x_{v_3})\}) = \mathrm{red};\\
            C_4, &\text{if } \phi^{(2)}( \{\delta(x_{v_1}, x_{v_2}), \delta(x_{v_2}, x_{v_3})\}) = \mathrm{blue}.
        \end{cases}
    \]

    Recalling that $\delta(x_{v_i}, x_{v_{i+1}}) = h^{u-1}(v_i)$ for all $i \in [u-1]$ (assuming that $h^{u-1}(v_1)>\dots >h^{u-1}(v_{u-1})$), we get that in either case, the color $\phi^{(u)}(h^u(e\cup \{v_u\}))$ determines the color $\phi^{(u-1)}(\{h^{u-1}(v_1), \dots, h^{u-1}(v_{u-1})\})=\phi^{(u-1)}(h^{u-1}(e))$. To finish, we shall argue that $\phi^{(u)}(h^u(e\cup \{v_u\}))=C_q$, which will imply $h^{u-1}$ is indeed an almost monochromatic embedding of $H_R^{u-1}$ into $\phi^{(u-1)}_m[b_{u-1}]$.
    
    Since $h^u$ is an almost monochromatic embedding of $H_R^u$ in color $C_q$, it suffices to show that $h^u$ is injective on the set $e\cup \{v_u\}$ (if it was not injective, then $\phi^{(u)}(h^u(e\cup \{v_u\}))$ is still well-defined but might not be $C_q$). If this was not the case, then there would be some $i\in [u-1]$ so that $\delta(x_{v_i},x_{v_{i+1}})=0$. But we have $\delta(x_{v_{u-1}},x_{v_u}) \ge \delta_{\ell_{t_2}-1}>\delta_{\ell_{t_3}-1}\ge 0$ and since we already observed $\delta(x_{v_1},x_{v_2})>\dots>\delta(x_{v_{u-1}},x_{v_u})$, this completes the argument.

    It remains to show that $h^{u-1}$ is a monochromatic embedding of $H_E^{u-1}$ into $\phi^{(u-1)}_m[b_{u-1}]$. Recall that $h^u$ is a monochromatic embedding of $H_E^u$ into $\phi^u[b_u]$ and suppose the color of this embedding is $C_{q'}$. Consider an edge $e = \{v_1, \dots, v_{u-1}\} \in E(H_E^{u-1})$ and without loss of generality, assume that $v_1 \le v_2 \le \dots \le v_{u-1}$. By definition, there is an index $i \in [e(\calT)]$ such that $e \subseteq B^{u-1}_i$ and some $(u-2)$ vertices of $e$ form a connected set in $F_i[e]$. Recalling that every vertex in $B^{u-1}_i$ has a neighbor in $I_3\cap B_i^u$, there is a vertex $v_u \in I_3 \cap B_i^u$ such that some $u-1$ vertices of $e \cup \{v_u\}$ form a connected set in $F_i[e \cup \{v_u\}]$, implying that $e \cup \{v_u\} \in E(H_E^u)$. By Claim~\ref{cl:no-two-in-same-interval}, no two vertices of $e$ lie in the same set $S_j$. From \eqref{eq:deltas-from-Sj}, it follows that $h^{u-1}(v_1) > h^{u-1}(v_2) > \dots > h^{u-1}(v_{u-1})$. By the same argument as above, it follows that the color $\phi^{(u-1)}(h^{u-1}(e))$ is determined by $\phi^{(u)}(h^u(e\cup \{v_u\})) =C_{q'}$, implying that $h^{u-1}$ is a monochromatic embedding of $H_E^{u-1}$ into $\phi^{(u-1)}_m[b_{u-1}]$, as needed.
\end{proof}

\vspace{0.2cm}
{\bf Acknowledgment:}  We would like to thank David Conlon for stimulating discussions and valuable comments on an earlier draft of this paper.

\bibliographystyle{plain}
\bibliography{references}

\end{document}